\newtheorem{theorem}{Theorem}[section]
\newtheorem{lemma}[theorem]{Lemma}
\theoremstyle{definition}
\newtheorem{definition}[theorem]{Definition}
\newtheorem{example}[theorem]{Example}
\newtheorem{remark}[theorem]{Remark}
\numberwithin{equation}{section}
\begin{document}
\Large
\title[Hyperinner Product Spaces II]
{Hyperinner Product Spaces II}

\author[Ali Taghavi and Roja Hosseinzadeh]{Ali Taghavi$^1$ and Roja Hosseinzadeh$^2$}

\address{{ $^{1,2}$Department of Mathematics, Faculty of Mathematical Sciences,
 University of Mazandaran, P. O. Box 47416-1468, Babolsar, Iran.}}

\email{$^{1}$Taghavi@umz.ac.ir, $^{2}$ro.hosseinzadeh@umz.ac.ir}

\subjclass[2000]{46J10, 47B48}

\keywords{weak hypervector space, hyperinner product space, essential point}

\begin{abstract}\large
In this paper, we extend the definition of hyperinner product defined on weak hypervector spaces with a hyperoperation scalar product to weak hypervector spaces with the hyperoperations sum and scalar products.
\end{abstract} \maketitle

\section{\textbf{Introduction}}

\noindent

 In 1988, the concept of hypervector space was
first introduced by Scafati-Tallini.  She later considered more properties of such spaces and in $[15]$ introduced the concept of weak hypervector spaces. Authors, in $[8-12]$ considered hypervector spaces in viewpoint of analysis. In mentioned papers, authors introduced the concepts such as dimension of hypervector spaces, normed hypervector spaces, operator on these spaces and another important concepts. Authors in $[7]$ introduced the definition of real inner product on a hypervector space with hyperoperations sum and scalar product over the real field. Moreover, in $[12]$ authors introduced the concept of inner product on a weak hypervector space which in this paper, we extend this concept to weak hypervector spaces with the hyperoperations sum and scalar products. This is our motivation to introduce a new definition of hypergroups. In this field, for other definitions and applications see $[1-6]$ and $[16]$.
\section{\textbf{Hypergroup}}
In this section we introduce a new definition of hypergroups.
\begin{definition}
 $[16]$ A hyperoperation over a nonempty set $X$ is a mapping of
$X \times X$ into the set of all nonempty subsets of $X$.
\label{2.1}
\end{definition}
\begin{definition}
$[16]$ A nonempty set $X$ with exactly one hyperoperation $ \sharp $ is
called a hypergroupoid.
Let $(X, \sharp)$ be a hypergroupoid. For every point $x \in X$ and every nonempty
subset $A$ of $X$, we defined $x \sharp A = \cup _{a \in A} x \sharp a$.
\label{2.2}
\end{definition}
\begin{definition}
 A hypergroupoid $(X, \sharp)$ is said to be a hypergroup if \\
1- $(x \sharp y) \sharp z \cap x \sharp (y \sharp z) \neq \emptyset $ for all $x,y,z \in X$;\\
2- There is an unique element $0 \in X$ such that for every $x \in X$ there exists an unique element $y \in X$ for
which $0 \in x \sharp y$ and $0 \in y \sharp x$. Here $y$ is denoted by $-x$;\\
3- $x \in ((x \sharp y) \sharp -y) \cap (x \sharp 0)$ for all $x,y \in X$.\\
$(X, \sharp)$ is said to be a commutative hypergroup when $x \sharp y=y \sharp x$ for all $x,y \in X$.
\label{2.3}
\end{definition}
\begin{example}
Let $\alpha$ be a real number with $ 0 < \alpha < 1$ and for $x,y \in \mathbb{R}^2$ define
$$ x \sharp y=\{re^{i \theta}: \alpha |x+y| < r < |x+y|, \theta = arg (x+y)\};$$
i.e. the set of all points in $\mathbb{R}^2$ belonging to the line segment whose vertices are the $\alpha (x+y)$ and $x+y$.
It's easy to see that $(\mathbb{R}^2, \sharp )$ is a hypergroup.
\label{2.4}
\end{example}
\begin{example}
Let $\alpha$ be a real number with $ 0 < \alpha < 1$ and for $x,y \in \mathbb{R}^2$ define
$$ x \sharp y=\{re^{i \theta}: \alpha |x+y| < r < |x+y|, 0 \leq \theta \leq 2 \pi \}.$$
It,s easy to see that $(\mathbb{R}^2, \sharp )$ is a hypergroup.
\label{2.5}
\end{example}
\begin{lemma}
Let $(X, \sharp)$ be a hypergroup. Then for every $x, y \in X$, there exists an element $ e \in x \sharp y$ such that $x \in e \sharp -y$.
\label{2.6}
\end{lemma}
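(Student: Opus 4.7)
The plan is to derive the statement directly from axiom 3 in the definition of a hypergroup, essentially by unwinding what the set-operation $(x\sharp y)\sharp(-y)$ means.

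First, I would invoke axiom 3 applied to the pair $(x,y)$, which asserts that
$$x \in ((x\sharp y)\sharp -y)\cap (x\sharp 0),$$
so in particular $x\in (x\sharp y)\sharp(-y)$. Next I would extend the set-valued convention introduced in Definition~\ref{2.2} to the left-hand argument; although that definition only writes out $x\sharp A=\bigcup_{a\in A} x\sharp a$ for a point $x$ on the left, the symmetric convention $A\sharp x=\bigcup_{a\in A} a\sharp x$ is the evident meaning used in axiom~3 (otherwise $(x\sharp y)\sharp(-y)$ would not be defined, since $x\sharp y$ is a set). I would state this explicitly at the start of the proof so the reader sees where the union is coming from.

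With that convention, I would rewrite
$$(x\sharp y)\sharp(-y)=\bigcup_{a\in x\sharp y} a\sharp(-y).$$
Since $x$ belongs to this union, there must exist at least one $a\in x\sharp y$ with $x\in a\sharp(-y)$. Setting $e:=a$ gives an element $e\in x\sharp y$ satisfying $x\in e\sharp(-y)$, which is precisely the claim.

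The argument is essentially a one-line unpacking of axiom~3, so the only genuinely substantive point is the notational one above: confirming that $(x\sharp y)\sharp(-y)$ should be read as a union indexed over $x\sharp y$, so that membership of $x$ in this union produces the required witness $e$. No use of axioms~1 or~2, nor of the uniqueness of inverses, is needed. I do not expect any real obstacle; the difficulty of the lemma lies entirely in parsing conventions, not in mathematical content.
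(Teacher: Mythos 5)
Your proposal is correct and follows exactly the route the paper intends: the paper's proof consists of the single line ``It easily follows from part 3 of Definition 2.3,'' and your unpacking of $(x\sharp y)\sharp(-y)$ as $\bigcup_{a\in x\sharp y}a\sharp(-y)$ is precisely the omitted detail. Your explicit remark about extending the set-valued convention of Definition 2.2 to the left argument is a reasonable and welcome clarification, not a deviation.
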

\begin{proof}
It is easily follows from part 3 of Definition 2.3.
\end{proof}
\begin{definition} Let $(X, \sharp)$ be a hypergroup
and $x,y\in X$. Essential point of $x \sharp y$, that we denote it by $e_{x \sharp y}$, if $0 \not \in x \sharp y$, is the element of
$x \sharp y$ such that $x \in e_{x \sharp y} \sharp -y$. If $0 \in x \sharp y$, we define $e_{x \sharp y}=0$.
\label{2.7}
\end{definition}
\begin{example}
$e_{x \sharp y}$ in Example 2.4, for every $x,y \in \mathbb{R}^2$, is unique and equals to $x+y$, but $e_{x \sharp y}$ in Example 2.5 is not unique. For example for $x=1$ and $y=i$, two numbers $1+i$ and $-1+i$ are $e_{x \sharp y}$.
\label{2.8}
\end{example}
\begin{remark}
By the above example, $e_{x \sharp y}$ is not unique, necessarily. Hence we
denote the set of all essential points of $x \sharp y$ by $E_{x \sharp y}$.
\label{2.9}
\end{remark}
\section{\textbf{Hyperinner product spaces}}
In this section we introduce a new definition of a weak hypervector space and then define inner product on it.
\begin{definition}
 A $weak~hypervector~space$ over
a field $F$ is a quadruple $(X,\sharp,o,F)$ such that $(X, \sharp)$ is a commutative hypergroup and $o:F\times X\longrightarrow P_*(X)$ is a
multivalued product such that\\
1- $[ao(x \sharp y)] \cap [aox \sharp aoy]\neq
\emptyset$ for all $ a\in F$ and $x,y \in X$;\\
2- $[(a \sharp b)ox] \cap [aox \sharp box]\neq \emptyset$ for all $ a,b\in F$ and $x \in X$;\\
3- $ao(box)=(ab)ox$ for all $ a,b\in F$ and $x \in X$;\\
4- $ao(-x)=(-a)ox=-(aox)$ for all $ a\in F$ and $x \in X$;\\
5- $x \in 1ox$ for all $x \in X$;\\
The properties 1 and 2 are called weak right and left distributive
laws, respectively. Note that the set $ao(box)$ in 3 is of the
form $\cup_{y\in box}aoy$.
\label{3.1}
\end{definition}
For a hypervector space defined in the above definition, we can have a result similar to Lemma 2.6 in $[8]$ which implies the following definition.
\begin{definition}
 Let $X$ be a weak hypervector space over $F$, $a\in F$
and $x\in X$. Essential point of $aox$, that we denote it by $e_{aox}$, for $a\neq 0$ is the element of
$aox$ such that $x\in a ^{-1}oe_{aox}$. For $a =0$, we define
$e_{aox}=0$.
\label{3.2}
\end{definition}
\begin{remark}
Similar to $e_{x \sharp y}$, by the stated examples in $[8]$, $e_{aox}$ is not unique, necessarily.
 Hence we denote the set of all these elements by
$E_{aox}$.
\label{3.3}
\end{remark}
In the following first we introduce a new definition of norm on hypervector spaces defined in 3.1 and then inner products on such spaces, over real or complex fields.

Next assume that $X$ is a weak hypervector space over the field $F$, where the field $F$ is a real or complex field.
\begin{definition}
 A norm on $X$ is a mapping $\|.\| :X \longrightarrow
\mathbb{R}$ such that for every $a\in F$ and $x,y\in X$ we have

1- $\|x\|=0 \Leftrightarrow x=0$;

2- $  \mathrm{sup}\|x \sharp y\|\leq \|x\|+\|y\| $;

3- $ \mathrm{sup} \|aox\|=\vert a \vert\|x\| $.

 $(X, \|.\|)$ is called a normed weak hypervector space.
  \label{3.4}
 \end{definition}
\begin{lemma}
Let $ \|.\|$ be a norm on $X$. Then the following properties are hold.

1-  $  $  $\|-y\|= \|y\|$.

2-  $  $  $\mathrm{sup}\|x  \sharp 0\|= \|x\|$.

3-  $  $  $| \|x\|-\|y\| | \leq \mathrm{sup}\|x \sharp -y\|$.
\label{3.5}
\end{lemma}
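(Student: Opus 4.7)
The plan is to treat the three statements separately, each as a short deduction from the axioms of Definitions 2.3, 3.1, 3.4 together with Lemma 2.6. Each part mirrors its classical counterpart for ordinary normed spaces, but I must be careful to work with suprema over hyperoperation outputs rather than values.

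For part (1), I would use property 3 of Definition 3.4 with $a=-1$ to obtain $\sup\|(-1)oy\|=\|y\|$. Combining property 5 of Definition 3.1 ($y\in 1oy$) with property 4 ($(-1)oy=-(1oy)$), we get $-y\in(-1)oy$, and therefore $\|-y\|\le \sup\|(-1)oy\|=\|y\|$. The reverse inequality comes from applying the same argument with $y$ replaced by $-y$, using that $-(-y)=y$ by the uniqueness of inverses in Definition 2.3(2).

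Part (2) is the quickest: axiom 3 of Definition 2.3 gives $x\in x\sharp 0$, so $\|x\|\le\sup\|x\sharp 0\|$. The opposite estimate follows from the sub-additivity property 2 of Definition 3.4, once we note that $\|0\|=0$ by property 1 of the norm; this gives $\sup\|x\sharp 0\|\le\|x\|+\|0\|=\|x\|$.

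Part (3) is the substantive one and the step I expect to require the most care. The strategy is to upgrade Lemma 2.6 into a reverse triangle-type inequality. Given $x,y\in X$, Lemma 2.6 applied to the pair $(x,-y)$ produces an element $e\in x\sharp(-y)$ with $x\in e\sharp-(-y)=e\sharp y$. Using sub-additivity one obtains $\|x\|\le\sup\|e\sharp y\|\le\|e\|+\|y\|\le\sup\|x\sharp-y\|+\|y\|$, which rearranges to $\|x\|-\|y\|\le\sup\|x\sharp-y\|$. For the symmetric bound I would exploit commutativity of $\sharp$ (so $x\sharp-y=-y\sharp x$) and apply Lemma 2.6 to the pair $(-y,x)$: this yields $e'\in-y\sharp x$ with $-y\in e'\sharp-x$, whence $\|y\|=\|-y\|\le\|e'\|+\|x\|\le\sup\|x\sharp-y\|+\|x\|$, using part (1) of the present lemma. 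Combining both inequalities gives the absolute-value form required. The only delicate point is that the ``reverse-triangle'' element produced by Lemma 2.6 lies in the correct hyperoperation set; once commutativity is used this is straightforward.
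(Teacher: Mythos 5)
Your proof is correct and follows essentially the same route as the paper's: part (1) via axioms 4--5 of Definition 3.1 together with homogeneity of the norm, part (2) via subadditivity with $\|0\|=0$ and the containment $x\in x\sharp 0$, and part (3) via the element supplied by Lemma 2.6 (the essential point of $x\sharp -y$) together with subadditivity. One small point in your favour: for the second inequality in part (3) the paper asserts $\mathrm{sup}\|y\sharp -x\|=\mathrm{sup}\|x\sharp -y\|$, which does not follow from commutativity alone since $y\sharp(-x)$ and $x\sharp(-y)$ are in general different sets, whereas your application of Lemma 2.6 to the pair $(-y,x)$ combined with $(-y)\sharp x=x\sharp(-y)$ and part (1) keeps the auxiliary element inside $x\sharp -y$ and so avoids that gap.
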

\begin{proof}
1- We have $ -y \in 1o(-y)$ and by part 5 of Definition 3.1, $ (-1)oy=1o(-y)$. Thus by part 3 of Definition 3.2,
$$ \| -y  \| \leq  \mathrm{sup} \| (-1)oy\|= \| y  \|.$$
Similarly, we can conclude that $ \| y  \| \leq \| -y  \|$ and hence the proof is completed.

2- By part 3 of Definition 3.4, $\mathrm{sup}\|x  \sharp 0\| \leq \|x\|$. On the other hand, since $x \in x  \sharp 0$, we obtain $\|x\| \leq \mathrm{sup}\|x  \sharp 0\|$. Therefore, $\mathrm{sup}\|x  \sharp 0\|= \|x\|$.

3- By Definition 2.7, $e_{x \sharp -y} \in x \sharp -y$ and $x \in e_{x \sharp -y} \sharp -(-y)=e_{x \sharp -y} \sharp y$ which follow
\begin{eqnarray*}ý
 \begin{array}{lcl}ý
  \|x\| \leq \mathrm{sup} \|e_{x \sharp -y} \sharp y \| \leq  \|e_{x \sharp -y} \| + \| y \|\\
\Rightarrow  \|x\|-\|y\| \leq  \|e_{x \sharp -y} \| \leq  \mathrm{sup} \|x \sharp -y\|.
 ý\end{array}
 ý\end{eqnarray*}
 In a similar way we obtain $\|y\|- \|x\| \leq  \mathrm{sup} \|y \sharp -x\|= \mathrm{sup} \|x \sharp -y\|$ and so $| \|x\|-\|y\| | \leq \mathrm{sup}\|x \sharp -y\|$.
\end{proof}
\begin{definition}
Let $X$ be a normed weak hypervector space. An open hyperball is defined as $B_r(x) =\{ y \in X : \mathrm{sup}\|y-x\| < r\}$ with radius $r > 0$ and center at $x$.
\label{3.6}
\end{definition}
Authors in $[7]$ introduced the definition of inner product on a hypervector space with hyperoperations sum and scalar product over the real field. The following lemma shows that by the defined inner product in $[7]$, the set $x \sharp y$ is singleton. In fact, the operation sum is classic in the defined hyperinner product spaces in $[7]$.
\begin{lemma}
Let $X$ be the hypervector space over $\mathbb{R}$ as defined in $[7]$ and $(.,.)$ be also the defined inner product in $[7]$ on $X$. Then $x \sharp y$ is singleton for every $x,y \in X$.
\label{3.7}
\end{lemma}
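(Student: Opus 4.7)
The plan is to pick two arbitrary elements $u,v \in x \sharp y$ and show $u = v$, using the inner product axioms from $[7]$ together with the hypergroup axioms of Definition 2.3. The key conceptual point is that although $x \sharp y$ is a priori a set, the inner product in $[7]$ is a genuine $\mathbb{R}$-valued function, and its distributive property with respect to $\sharp$ forces every element of $x \sharp y$ to pair in the same way with every $z \in X$.

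First I would invoke the distributive axiom of the inner product from $[7]$, which asserts that for every $w \in a \sharp b$ and every $z \in X$ one has $(w,z) = (a,z) + (b,z)$. Applied to $u,v \in x \sharp y$, this gives
$$(u,z) \;=\; (x,z) + (y,z) \;=\; (v,z) \qquad \text{for every } z \in X.$$

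Next I would pick any $w \in u \sharp -v$, which is nonempty. The same distributive identity, together with the scalar homogeneity $(-v,z) = -(v,z)$ (the analogue of part 4 of Definition 3.1), yields $(w,z) = (u,z) - (v,z) = 0$ for every $z \in X$. Setting $z = w$ and invoking positive definiteness of the inner product forces $w = 0$, hence $0 \in u \sharp -v$. By the uniqueness of inverses in a hypergroup (part 2 of Definition 2.3), this gives $-v = -u$, so $u = v$ and $x \sharp y$ is a singleton.

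The main obstacle is that the precise axioms of the inner product from $[7]$ are not reproduced in the excerpt, so the proof is really an unpacking of those axioms: we need the additive distributive law of $(\cdot,z)$ over $\sharp$ in the form "$(w,z) = (a,z)+(b,z)$ for every $w \in a\sharp b$" together with positive definiteness. Once these are cited, the argument reduces to the few lines above with no computational difficulty.
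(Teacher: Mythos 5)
There is a genuine gap, and it sits exactly where you flagged your uncertainty: the form of the additivity axiom in $[7]$. The inner product of $[7]$ does \emph{not} assert that $(w,z)=(x,z)+(y,z)$ for every $w\in x\sharp y$; it only asserts the supremum form $\mathrm{sup}(x\sharp y,z)=(x,z)+(y,z)$, i.e.\ a statement about the set $\{(w,z): w\in x\sharp y\}$. If the pointwise form you postulate were an axiom, the lemma would be nearly vacuous; the entire point of the lemma (see the remark preceding it) is that even the weaker sup-form axioms of $[7]$ force $x\sharp y$ to collapse to a single point. Your proposal therefore assumes as an axiom precisely the statement that constitutes the main work of the proof.

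The missing idea is the upgrade from the sup identity to the pointwise identity, and it uses part 3 of the hypergroup definition. For $u\in x\sharp y$ one has on the one hand $(u,z)\le \mathrm{sup}(x\sharp y,z)=(x,z)+(y,z)$; on the other hand $u\in x\sharp y$ implies $x\in u\sharp -y$, so $(x,z)\le \mathrm{sup}(u\sharp -y,z)=(u,z)+(-y,z)=(u,z)-(y,z)$, where $(-y,z)=-(y,z)$ is itself derived from $-y\sharp y=\{0\}$ and the sup identity (not from part 4 of Definition 3.1, which concerns the scalar product, not the inner product). The two inequalities force $(u,z)=(x,z)+(y,z)$ for \emph{every} $u\in x\sharp y$. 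From that point on, your argument — take $u,v\in x\sharp y$, deduce $(t,z)=0$ for all $t\in u\sharp -v$ and all $z$, set $z=t$ to get $t=0$ by positive definiteness, hence $0\in u\sharp -v$ and $u=v$ by uniqueness of inverses — coincides with the second half of the paper's proof and is fine. So the conclusion is reachable, but only after the reverse-inequality step you omitted.
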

\begin{proof}
First we assert that $(x \sharp y, z)$ is singleton for every $x,y,z \in X$.
For a commutative hypergroup in $[7]$, $-y \sharp y=\{0\}$ and by the defined inner product in it we have
$$ \mathrm{sup}(x \sharp y,z)= (x,z)+ (y,z).$$
Thus, $ \mathrm{sup}(-y \sharp y,z)= (-y,z)+ (y,z)$. On the other hand, $ \mathrm{sup}(-y \sharp y,z)= \mathrm{sup}(0,z)=0$. Therefore, $ (-y,z)=- (y,z)$.

Now for any $u$ belonging to $x \sharp y$,
$$ (u,z) \leq (x,z)+ (y,z).~~~\leqno(1)$$
By part 3 of definition of a hypergroup in $[7]$, $u \in x \sharp y$ yields that $x \in u \sharp -y$ and hence $ (x,z) \leq (u,z)+ (-y,z)$ and
$$ (x,z) \leq (u,z)-(y,z).~~~\leqno(2)$$
$(1)$ and $(2)$ yield that $ (u,z)= (x,z)+ (y,z)=  \mathrm{sup}(x \sharp y,z)$ and the proof of assertion is completed.

Let $u,v \in x \sharp y$. For any $z \in X$, by assertion we have $ (u,z)= (x,z)+ (y,z)$ and $ (v,z)= (x,z)+ (y,z)$. This for any $z \in X$ implies that
$$ (u-v,z)= (u,z)-(v,z)=0$$
and hence $(t,z)=0$ for any $t \in u-v$ and $z \in X$. Setting $z=t$ in the last relation implies that $(t,t)=0$ for any $t \in u-v$ and so $u-v=\{0\}$. Therefore, $u=v$ and then $x \sharp y$ is singleton.
\end{proof}
\begin{definition}
 An inner product on $X$ is a mapping $(.,.):X\times X
\rightarrow F$ such that for every $a\in F$ and $x,y,z\in X$ we have

 1-  $  $  $ (x,x)>0$ for $x\neq 0$;

 2-  $  $  $(x,x)=0 \Leftrightarrow x=0$;

 3-  $  $  $ \exists e_{x \sharp y} \in E_{x \sharp y}: \forall z \in X,~(x,z)+(y,z)=(e_{x \sharp y},z)$;

 4-  $  $  $ (y,x)=\overline{(x,y)}$;

 5-  $  $  $ \exists e_{aox} \in E_{aox}: \forall y \in X,~a(x,y)=(e_{aox},y)$;

 6-  $  $  $ (u,u) \leq (e_{x \sharp y},e_{x \sharp y})$ for every $u \in x \sharp y$;

 7-  $  $  $ (u,u) \leq (x,x)$ for every $u \in 1ox$.

 $X$ with an inner product is called a hyperinner product space.
 \label{3.8}
\end{definition}
Next assume that $X$ is a hyperinner product space over the field $F$ and
$e_{x \sharp y}$ and $e_{aox}$ is the same vectors in parts 3 and 5 of above definition, respectively.
\begin{lemma}
The following statements are hold for every $x,y \in X$ and $ a,b\in F$.

1-   $  $  $(0,x)=(x,0)=0$.

2-   $  $ $(-x,y)=(x,-y)=-(x,y)$.

3-   $  $ $(x,e_{aoy})=\overline{a}(x,y)$.

4-   $  $ $(u,u) \leq a^2(x,x)$ for every $u \in aox$.

5-   $  $ $a(box,y)=(abox,y)$.
\label{3.9}
\end{lemma}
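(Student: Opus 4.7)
The plan is to dispatch the five items one at a time, drawing mainly on axioms 4 and 5 of Definition 3.8 with earlier parts of the lemma used as stepping stones. Part 1 is immediate from axiom 5 with $a=0$: since $e_{0ox}=0$ by Definition 3.2, one gets $(0,y)=0\cdot(x,y)=0$ for every $y$, and then $(y,0)=\overline{(0,y)}=0$ by axiom 4. For part 2, observe that $-x\in E_{(-1)ox}$: by axiom 4 of Definition 3.1, $(-1)ox=-(1ox)\ni -x$, and $(-1)o(-x)=1ox\ni x$ supplies the defining condition in Definition 3.2. Taking this as the designated essential point, axiom 5 with $a=-1$ gives $(-x,y)=-(x,y)$, and $(x,-y)=-(x,y)$ follows from axiom 4. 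Part 3 is a one-line calculation: $(x,e_{aoy})=\overline{(e_{aoy},x)}=\overline{a(y,x)}=\overline{a}(x,y)$.

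Part 4 requires the only non-routine idea of the lemma: use axiom 7, which controls $(u,u)$ only on $1ox$, via the substitution $u\mapsto a^{-1}ou$. Assume $a\neq 0$ and fix $u\in aox$. Axiom 3 of Definition 3.1 gives $a^{-1}ou\subseteq a^{-1}o(aox)=1ox$, so $e_{a^{-1}ou}\in 1ox$ and axiom 7 yields $(e_{a^{-1}ou},e_{a^{-1}ou})\leq(x,x)$. On the other hand, applying axiom 5 with scalar $a^{-1}$ and vector $u$, specializing $y=e_{a^{-1}ou}$, and using part 3 of the present lemma, one computes $(e_{a^{-1}ou},e_{a^{-1}ou})=a^{-1}(u,e_{a^{-1}ou})=a^{-1}\overline{a^{-1}}(u,u)=|a|^{-2}(u,u)$. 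Combining yields $(u,u)\leq|a|^{2}(x,x)$, which is the claim (reading $a^2$ as $|a|^2$ in the complex case). The degenerate case $a=0$ is handled separately, for instance by first establishing $0ox=\{0\}$ from the axioms and falling back on part 1.

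Part 5 is a double application of axiom 5: interpreting both sides at the designated essential points, one has $a(e_{box},y)=a\cdot b(x,y)=ab(x,y)=(e_{(ab)ox},y)=(e_{abox},y)$. In summary, parts 1--3 and 5 are essentially bookkeeping with axioms 4 and 5, while the one place where genuine ingenuity is needed is the $a^{-1}$-maneuver in part 4, which lifts the $1ox$-restricted axiom 7 to an arbitrary $aox$; I expect that step to be the main obstacle.
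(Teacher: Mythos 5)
Your parts 3 and 4 coincide with the paper's argument --- including the key $a^{-1}$-maneuver that lifts axiom 7 from $1ox$ to $aox$ --- and your part 1 (via axiom 5 with $a=0$, where Definition 3.2 forces $E_{0ox}=\{0\}$, so the designated essential point must be $0$) is a legitimate alternative to the paper's route through axiom 3 and $e_{0\sharp 0}=0$. But part 2 has a genuine gap. Axiom 5 only asserts that \emph{some} essential point $e\in E_{(-1)ox}$ satisfies $(e,y)=-(x,y)$ for all $y$; since essential points are not unique (Remark 3.3), you are not entitled to ``take $-x$ as the designated essential point'' --- the axiom does not let you choose which member of $E_{(-1)ox}$ it refers to, and nothing in your argument identifies $(-x,y)$ with $(e,y)$. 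The paper avoids this by applying axiom 3 with $y=-x$: since $0\in x\sharp(-x)$, Definition 2.7 forces $E_{x\sharp -x}=\{0\}$, so the designated point is unambiguously $0$ and $(x,y)+(-x,y)=(0,y)=0$.

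Part 5 also falls short of the claim. The assertion $a(box,y)=(abox,y)$ is an equality of \emph{sets}, namely $a\{(z,y):z\in box\}=\{(w,y):w\in(ab)ox\}$, whereas you only check that the two sides agree at one designated essential point on each side. The paper proves the inclusion $a(box,y)\subseteq(abox,y)$ by running over \emph{every} $z\in box$ (writing $a(z,y)=(e_{aoz},y)$ with $e_{aoz}\in aoz\subseteq ao(box)=(ab)ox$ by axiom 3 of Definition 3.1), and then obtains the reverse inclusion by applying the same containment with the scalar $a^{-1}$ to $(ab)ox$. Your essential-point computation yields neither inclusion, so this part needs to be redone along those lines.
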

\begin{proof}
1- Since $ e_{0 \sharp 0}=0$, by part 3 of Definition 3.8, $(0,x)+(0,x)=(0,x)$ and so $(0,x)=0$.

2- Since $ e_{x \sharp -x}=0$, by part 3 of Definition 3.8 and Part 1, $(x,y)+(-x,y)=(0,y)=0$ and so $-(x,y)=(-x,y)$.\\
For the second equality we have $(x,-y)=\overline{(-y,x)}=\overline{-(y,x)}=-(x,y)$.

3- $  $$(x,e_{aoy})=\overline{(e_{aoy},x)}=\overline{a(y,x)}=\overline{a}(x,y)$.

4- For every $u \in aox$ we have $e_{a^{-1}ou} \in 1ox$ and so $(e_{a^{-1}ou},e_{a^{-1}ou}) \leq (x,x)$. This by Part 3 implies that $ \frac{1}{a^2}(u,u) \leq (x,x)$.

5- By part 5 of Definition of 3.8
$$a(box,y)=\cup_{z\in box}a(z,y)  \subseteq \cup_{z\in box} (E_{aoz},y) \subseteq \cup_{z\in box} (abox,y)=(abox,y)$$
which implies
$$a^{-1}(abox,y) \subseteq (a^{-1}abox,y)=(box,y)$$
and this completes the proof.
\end{proof}
The following lemma shows that $\mathrm{sup}\|x  \sharp y\| \in  \|x  \sharp y\|$.
\begin{lemma}
 If $\|x\|=\sqrt{(x,x)}$, then the following statements are hold.

1-   $  $  $ |(x,y)| \leq \|x\| \|y\|$.

2-   $  $  $\mathrm{sup}\|x  \sharp y\| =  \|e_{x  \sharp y}\|$.
\label{3.10}
\end{lemma}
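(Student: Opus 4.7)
The plan is to adapt the classical Cauchy--Schwarz argument to the hyperinner product setting, with due attention to the fact that both sums and scalar multiples are set-valued and must be replaced by suitable essential points.

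For Part~1, I would first dispose of the case $y=0$ using Lemma~3.9(1), which makes both sides of the inequality vanish. For $y\ne 0$, set $\lambda=(x,y)/(y,y)$. The first step is to apply Part~5 of Definition~3.8 to obtain $e_1:=e_{(-\lambda)oy}$ with $(e_1,z)=-\lambda(y,z)$ for all $z\in X$. The second step is to apply Part~3 of Definition~3.8 to the pair $x,e_1$, producing an essential point $e:=e_{x\sharp e_1}\in E_{x\sharp e_1}$ satisfying
\[(e,z)=(x,z)-\lambda(y,z)\qquad(z\in X).\]
The key calculation is then to set $z=e$, use Part~4 to move conjugates across $(x,e)$ and $(y,e)$, and expand
\[(e,e)=(x,x)-\overline{\lambda}(x,y)-\lambda(y,x)+|\lambda|^2(y,y).\]
With the chosen $\lambda$ the last three terms collapse to $-|(x,y)|^2/(y,y)$, leaving $(e,e)=(x,x)-|(x,y)|^2/(y,y)$. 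Non-negativity $(e,e)\ge 0$, guaranteed by Parts~1--2 of Definition~3.8, then yields $|(x,y)|^2\le\|x\|^2\|y\|^2$, which is the desired bound.

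For Part~2, the argument should be short: Part~6 of Definition~3.8 gives $(u,u)\le(e_{x\sharp y},e_{x\sharp y})$ for every $u\in x\sharp y$, hence $\|u\|\le\|e_{x\sharp y}\|$ and so $\sup\|x\sharp y\|\le\|e_{x\sharp y}\|$. The reverse inequality is automatic from $e_{x\sharp y}\in x\sharp y$ (Definition~2.7), giving equality.

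The main obstacle lies in Part~1: because $x\sharp(-\lambda)oy$ is genuinely a set, the formal vector ``$x-\lambda y$'' is not literally an element of $X$, so one cannot directly write down $(x-\lambda y,x-\lambda y)$. The two-step construction of $e$ above bypasses this difficulty by producing a single honest vector of $X$ whose inner product with every $z$ coincides with what $(x-\lambda y,z)$ would be; once $e$ is in hand, the remaining scalar manipulation is entirely routine.
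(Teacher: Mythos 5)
Your proof is correct and follows essentially the same strategy as the paper: both arguments use Parts 3 and 5 of Definition 3.8 to manufacture a single honest essential point playing the role of $x-\lambda y$, and then invoke positivity of $(\cdot,\cdot)$ to extract the Cauchy--Schwarz inequality, while Part 2 is word-for-word the paper's argument. The only difference is that you substitute the optimal scalar $\lambda=(x,y)/(y,y)$ directly, whereas the paper rotates by a unimodular $\alpha$ and runs the one-parameter family $e_{x\sharp -e_{r\alpha o y}}$ over all real $r$, concluding via the discriminant of the resulting nonnegative quadratic; the two variants are interchangeable, and yours has the minor advantage of handling the complex case without the auxiliary rotation.
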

\begin{proof}
1- Set $A=\|x\|$, $B=|(x,y)|$ and $C=\|y\|$. There exists a complex number $ \alpha$ such that $ | \alpha |=1$ and $\alpha(y,x)=1$. Then for every real number $r$ we have
\begin{eqnarray*}
(e_{x \sharp -e_{r \alpha oy}},e_{x \sharp -e_{r \alpha oy}})&=&e(_{x \sharp -e_{r \alpha oy}}, x)-(e_{x \sharp -e_{r \alpha oy}},e_{r \alpha oy})\\
&=& (x, x)-(e_{r \alpha oy}, x)-(x,e_{r \alpha oy})-(e_{r \alpha oy},e_{r \alpha oy})\\
&=& (x, x)-r \alpha (y, x)-r \overline{ \alpha }(x,y)-r^2(y,y).
\end{eqnarray*}
Therefore for every real number $r$
$$Cr^2-2Br+A \geq 0.$$
If $C=0$, then $B=0$. Otherwise, we obtain $B^2 \leq AC$ and this completes the proof.

2- Let $u \in x \sharp y$. By part 6 of Definition 3.8, for every $u \in x \sharp y$ we have $(u,u) \leq (e_{x \sharp y},e_{x \sharp y})$ which implies that $\|u\| \leq  \|e_{x  \sharp y}\|$ and hence $\mathrm{sup}\|x  \sharp y\| \leq  \|e_{x  \sharp y}\|$. This together with $ e_{x  \sharp y} \in x  \sharp y$ yields the assertion.
\end{proof}
\begin{theorem}
 $\|x\|=\sqrt{(x,x)}$ has the properties of a norm on $X$.
\label{3.11}
\end{theorem}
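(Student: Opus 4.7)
The plan is to verify the three defining axioms of a norm from Definition 3.4 in turn, leaning on Lemma 3.9 and Lemma 3.10, whose hypotheses are satisfied as soon as we work with $\|x\|=\sqrt{(x,x)}$.

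First I would dispatch the two easy axioms. Property 1, $\|x\|=0\Leftrightarrow x=0$, is immediate from axioms 1 and 2 of Definition 3.8 (strict positivity of $(x,x)$ for $x\neq 0$, and the converse). For Property 3, the upper bound $\sup\|aox\|\leq |a|\|x\|$ is exactly Lemma 3.9(4): any $u\in aox$ satisfies $(u,u)\leq |a|^{2}(x,x)$, hence $\|u\|\leq |a|\|x\|$. For the matching lower bound, I would compute $(e_{aox},e_{aox})$ explicitly: by Lemma 3.9(3), $(x,e_{aox})=\overline{a}(x,x)$, and taking conjugates (axiom 4) together with axiom 5 yields $(e_{aox},e_{aox})=a\,\overline{a}(x,x)=|a|^{2}(x,x)$, so $\|e_{aox}\|=|a|\|x\|$. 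Since $e_{aox}\in aox$, this forces $\sup\|aox\|\geq |a|\|x\|$, completing Property 3.

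The real work is Property 2, the triangle inequality, and this is where I expect the only nontrivial step. By Lemma 3.10(2), $\sup\|x\sharp y\|=\|e_{x\sharp y}\|$, so it suffices to show $\|e_{x\sharp y}\|\leq \|x\|+\|y\|$. I would plug $z=e_{x\sharp y}$ into axiom 3 of Definition 3.8 to obtain
\begin{equation*}
(e_{x\sharp y},e_{x\sharp y})=(x,e_{x\sharp y})+(y,e_{x\sharp y}),
\end{equation*}
then take absolute values and apply the Cauchy--Schwarz inequality from Lemma 3.10(1) to each summand on the right:
\begin{equation*}
\|e_{x\sharp y}\|^{2}\leq \|x\|\,\|e_{x\sharp y}\|+\|y\|\,\|e_{x\sharp y}\|.
\end{equation*}
If $\|e_{x\sharp y}\|=0$ the inequality $\|e_{x\sharp y}\|\leq \|x\|+\|y\|$ is trivial; otherwise divide by $\|e_{x\sharp y}\|$ to finish.

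The main obstacle is purely the triangle inequality: it is the one step that uses Cauchy--Schwarz together with the structural identity between $e_{x\sharp y}$ and the sum of inner products. Once Lemma 3.10 is in hand, however, this is a short computation, and the other two norm axioms are essentially free from Lemma 3.9.
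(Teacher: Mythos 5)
Your proof is correct, and it diverges from the paper's in two respects worth noting. For the triangle inequality the paper bounds $\sup\|x\sharp y\|$ by taking an arbitrary $u\in x\sharp y$, invoking axiom 6 of Definition 3.8 to get $(u,u)\le(e_{x\sharp y},e_{x\sharp y})$, and then fully expanding $(e_{x\sharp y},e_{x\sharp y})=(x,x)+(x,y)+(y,x)+(y,y)=\|x\|^2+\|y\|^2+2\,\mathrm{Re}(x,y)\le(\|x\|+\|y\|)^2$, applying Cauchy--Schwarz only to the cross term; you instead route through Lemma 3.10(2), apply Cauchy--Schwarz directly to $(x,e_{x\sharp y})$ and $(y,e_{x\sharp y})$, and divide by $\|e_{x\sharp y}\|$. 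These are the two classical textbook arguments; yours avoids the double use of axiom 3 (and its conjugate via axiom 4) needed to justify the paper's expansion, at the cost of the separate case $\|e_{x\sharp y}\|=0$. The second difference is that for properties 1 and 3 the paper simply cites the proof of Lemma 2.12 of reference [12], whereas you actually supply the argument, including the lower-bound half of $\sup\|aox\|=|a|\|x\|$ via the identity $(e_{aox},e_{aox})=a\overline{a}(x,x)$; this is genuinely useful added content. The only soft spot, shared with the paper, is the case $a=0$: Lemma 3.9(4) is proved using $a^{-1}$, and $e_{0ox}=0$ need not lie in $0ox$, so the claim $\sup\|0ox\|=0$ (equivalently $0ox=\{0\}$) does not follow from the tools you cite; it would deserve a separate remark.
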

\begin{proof}
The first and third properties of norm stated in Definition 3.4 can be proved in a similar way of the proof of Lemma 2.12 in $[12]$. We prove the second property. Let $u \in x \sharp y$. By part 6 of Definition 3.8 we have $(u,u) \leq (e_{x \sharp y},e_{x \sharp y})$ and by part 3
\begin{eqnarray*}
(e_{x \sharp y},e_{x \sharp y})&=& (x,e_{x \sharp y})+(y,e_{x \sharp y}) \\
 &=&(x,x)+(x,y)+(y,x)+(y,y)\\
 &=& \|x\|^2+  \|y\|^2+ 2 Re (x,y)\\
 & \leq& \|x\|^2+  \|y\|^2+ 2 \|x\| \|y\|\\
  &=&(\|x\|+  \|y\|)^2
\end{eqnarray*}
which implies that $\|u\| \leq \|x\|+  \|y\|$ for every $u \in x \sharp y$ and so $ \mathrm{sup}\|x \sharp y\|\leq \|x\|+\|y\| $.
\end{proof}
\par \vspace{.4cm}{\bf Acknowledgements.} This research is partially
Supported by the Research Center in Algebraic Hyperstructures and
Fuzzy Mathematics, University of Mazandaran, Babolsar, Iran.
\bibliographystyle{amsplain}

\end{document}